\numberwithin{equation}{section}
\newtheorem{theorem}{Theorem}
\numberwithin{theorem}{section}
\newtheorem{corollary}[theorem]{Corollary}
\newtheorem{proposition}[theorem]{Proposition}
\newtheorem{definition}[theorem]{Definition}
\theoremstyle{definition}
\newtheorem{remark}[theorem]{Remark}
\newtheorem{example}[theorem]{Example}
\newtheorem{warning}[theorem]{Warning}
\date{\today}
\title{Wild Betti sheaves}
\author{Peter Scholze}
\begin{document}

\begin{abstract} In this note, we consider the problem of constructing an enlargement of the category of Betti sheaves that supports an ``exponential local system'' on $\mathbb R$, and a Fourier equivalence defined on all sheaves. We show that there is a universal solution, recovering a construction of Tamarkin known also as ``enhanced sheaves''. The universality property implies that the category of coefficients of this theory is, in a suitable sense, a nontrivial $\mathbb R_{>0}$-torsor over $\mathrm{Spec}(\mathbb Z)$.
\end{abstract}

\maketitle

%\epigraph{Black Betty had a child, bam-ba-lam\\
%The damn thing gone wild, bam-ba-lam}{\textit{Ram Jam, ``Black Betty''}}

\tableofcontents

\section{Introduction}

There are various sheaf theories in algebraic and arithmetic geometry, with similar properties: The classical theory of Betti sheaves, the theory of $D$-modules, the theory of \'etale sheaves, the theory of arithmetic $D$-modules, etc.~. They share many common properties and in most situations one can faithfully translate from one setting into another. Even better, one can often work with the theory of motivic sheaves, which specializes to the other theories. However, there is one asymmetry that sometimes comes up: Namely, in only some of these settings, it is possible to define an interesting ``exponential local system'' $\mathrm{exp}$ on the affine line bundle $\mathbb A^1$. One way this often gets used is to define a Fourier transform on sheaves. Namely, for any vector space $V$ with dual vector space $V^\ast$, there is a natural pairing $b: V\times V^\ast\to \mathbb A^1$, and one can define the Fourier transform
\[
\mathcal F = p_{V^\ast!}(p_V^\ast\otimes b^\ast \mathrm{exp}): D(V)\to D(V^\ast).
\]
This turns out to (essentially) be its one inverse, giving an equivalence $D(V)\cong D(V^\ast)$.

The standard examples of exponential local systems are the following:
\begin{enumerate}
\item[{\rm (i)}] In the theory of $D$-modules over a field $k$ of characteristic $0$. Here, the $D$-module $\mathrm{exp}$ on $\mathbb A^1_k$ is given by the free rank $1$-module $k[T]\cdot e$ on $\mathbb A^1_k$ with connection $\nabla(f\cdot e)=(\nabla(f)-f)e$. The idea is that the basis element $e$ is given by $\mathrm{exp}(-T)$.
\item[{\rm (ii)}] In the theory of $\ell$-adic sheaves over a field $k$ of characteristic $p\neq \ell$. Here, pick an embedding $\psi: \mathbb F_p\to \overline{\mathbb Q}_\ell^\times$, and use it to turn the Artin--Schreier cover $\mathbb A^1_k\to \mathbb A^1_k: x\mapsto x^p-x$ with covering group $\mathbb F_p$ into a $\overline{\mathbb Q}_\ell$-local system $\mathrm{exp}$ on $\mathbb A^1_k$. A priori, this may not seem related to the exponential function, but note that $\psi$ is a map from an additive group to a multiplicative group, i.e.~a kind of exponential; and under Grothendieck's sheaf-function dictionary, $\psi$ yields exponential functions, and many applications to exponential sums in analytic number theory.
\end{enumerate}

The Fourier transform has many applications, for example Laumon's proof of Deligne's generalization of the Weil conjectures \cite{LaumonFourier}, and multiple applications in geometric representation theory; see \cite{LaumonICM} for an early survey. In geometric representation theory, one can often put oneself in a situation where the sheaves are scaling equivariant under the scaling action of $\mathbb G_m$ on $V$. In this case, Laumon observed that the homogeneous version of the Fourier transform can always be defined \cite{LaumonFourierHomogene}.

This discussion is closely related to the possibility of wild ramification. Namely, the exponential local system $\mathrm{exp}$ always has a ``wild'' singularity at $\infty$. In the case of $D$-modules, wild ramification means irregular singularities; while for \'etale sheaves in positive characteristic $p$, it means ramification of degree divisible by $p$. The Fourier transform will in general take tame sheaves to wild sheaves.

Thus, in sheaf theories such as Betti sheaves, or \'etale sheaves in characteristic $0$, where there are no ``wild'' sheaves, this story does not seem to exist. But in fact, in any sheaf theory it is possible to freely add an exponential local system yielding a Fourier equivalence. The goal of the present note is to make this procedure explicit in the case of Betti sheaves. This yields a category of ``wild Betti sheaves'' that contains usual Betti sheaves fully faithfully, comes with an ``exponential local system'' $\mathrm{exp}$ on $\mathbb R$, and supports a Fourier transform defined on all sheaves. In fact, in a slightly different language this theory was previously constructed by Tamarkin \cite{Tamarkin} and is closely related to the irregular Riemann--Hilbert correspondence, cf.~\cite{DAgnoloKashiwara}, where it is known as ``enhanced sheaves''. Tamarkin was motivated by applications in symplectic geometry: Namely, the coefficient category is closely related to the Novikov ring (more precisely, it has a forgetful functor to complete almost modules over the Novikov ring), and Tamarkin has shown that at least in some cases one can define the Fukaya category already with this coefficient category \cite{TamarkinFukaya}.

{\bf Acknowledgments.} It is a pleasure to dedicate this note to G\'erard Laumon. We thank Dennis Gaitsgory for comments on a preliminary version. Moreover, we are very grateful to Bingyu Zhang for kindly pointing us to the relevant previous literature on the subject.

\section{The coefficients}

The key construction we use is the category introcued below that we will use as our coefficient category. We will work $\infty$-categorically throughout and with sheaves of spectra, but of course one could specialize to abelian groups, or $R$-modules for some ring $R$.

We want to find some symmetric monoidal category $\mathcal C$ so that there is a nontrivial invertible $\mathcal C$-sheaf on $\mathbb R$. This necessarily means that $\mathcal C$ must be somewhat exotic: for any usual ring $R$, there are no nontrivial invertible sheaves of $R$-modules on $\mathbb R$, as they must be locally constant, and then trivial as $\mathbb R$ is contractible. The same argument works more generally when $\mathcal C$ has a compact unit. Indeed, in that case any invertible object is also compact. For any invertible object $\mathcal L\in \mathcal D(\mathbb R,\mathcal C)$ and $x\in \mathbb R$, let $L=\mathcal L_x\in \mathcal C$ be the fibre. By compactness of $L$, the isomorphism $L=\mathcal L_x$ spreads into a map $L\to \mathcal L|_U$ for some open neighborhood $U$ of $x$, which is also an isomorphism after shrinking $U$ (by arguing with the inverse). Finally, the contractible nature of $\mathbb R$ ensures that these local isomorphisms can be extended to a global one. Most categories $\mathcal C$ used in practice have compact unit, or at least admit a family of conservative functors to such categories, so that this argument shows that no nontrivial invertible $\mathcal C$-sheaves on $\mathbb R$ can exist. This includes all categories of quasicoherent sheaves on (analytic) stacks. It does not include, however, categories of almost modules -- here, the unit fails to be compact, and there are often no forgetful functors to categories with compact unit.

\begin{example}\label{ex:almostexp} Let $V$ be a rank $1$ valuation ring with non-discrete valuation. Let $\mathcal C$ be the category of almost modules over $V$, i.e.~the quotient of the category of $V$-modules by the full subcategory of $k$-modules, where $k$ is the residue field of $V$. Then there is a nontrivial invertible $\mathcal C$-sheaf $\mathcal L$ on $\mathbb R$ whose fibre at $r\in \mathbb R$ is the almost module corresponding to the $V$-module $\{x\in \mathrm{Frac}(V)\mid |x|\leq r\}$.
\end{example}

The author has long been intrigued by these interesting continuously varying families of line bundles on almost schemes. It turns out that the previous example yields an ``exponential local system'' defining a Fourier equivalence once one replaces $\mathcal C$ by the subcategory of complete modules. By Vaintrob's equivalence \cite{Vaintrob} recalled below, the following category is in some sense the universal case of the previous example:

\begin{definition}\label{def:wildcategory} Let $\mathbb W$ be the symmetric monoidal stable $\infty$-category of completely and continuously $\mathbb R$-filtered spectra. This is the full subcategory of the symmetric monoidal stable $\infty$-category of (ascendingly) $\mathbb R$-filtered spectra $(\mathrm{Fil}_r M)_r$, subject to:
\begin{enumerate}
\item[{\rm (i)}] Completeness: $\mathrm{lim}_{r\to -\infty} \mathrm{Fil}_r M = 0$, and
\item[{\rm (ii)}] Continuity: For all $r$, the map $\mathrm{Fil}_r M\to \mathrm{lim}_{r'>r} \mathrm{Fil}_{r'} M$ is an equivalence.
\end{enumerate}
The unit is given by $\mathbb S(0)$ with $\mathrm{Fil}_r \mathbb S(0) = 0$ for $r<0$ and $\mathrm{Fil}_r \mathbb S(0)= \mathbb S$ for $r\geq 0$. The tensor product is given by taking the tensor product of $\mathbb R$-filtered spectra and enforcing completeness and continuity.
\end{definition}

One can more precisely define the symmetric monoidal structure by regarding $\mathbb W$ as a Verdier quotient of $\mathbb R$-filtered spectra, and descending the symmetric monoidal structure.

We note that by continuity, the datum of the filtration $(\mathrm{Fil}_r M)_r$ is equivalent to the datum of the filtration $(\mathrm{Fil}_{<r} M)_r$ where $\mathrm{Fil}_{<r} M = \mathrm{colim}_{r'<r} \mathrm{Fil}_{r'} M$, subject to the continuity condition
\[
\mathrm{colim}_{r'<r} \mathrm{Fil}_{<r'} M\to \mathrm{Fil}_{<r} M
\]
being an equivalence.

There is a forgetful functor
\[
\mathbb W\to \mathrm{Sp}: (\mathrm{Fil}_r M)_r\mapsto \mathrm{colim}_{r\to \infty} \mathrm{Fil}_r M
\]
taking the underlying spectrum. The functor is only lax symmetric monoidal, and does not preserve colimits, as these operations in $\mathbb W$ must be followed by completion. There is another natural forgetful functor
\[
\mathbb W\to \mathrm{Sp}: (\mathrm{Fil}_r M)_r\mapsto \mathrm{Fil}_0 M
\]
that is right adjoint to the unit map $\mathrm{Sp}\to \mathbb W$ and hence also lax symmetric monoidal. Note that the composite $\mathrm{Sp}\to \mathbb W\to \mathrm{Sp}$ is the identity via the unit transformation, so $\mathrm{Sp}\to \mathbb W$ is fully faithful.

For any $r\in \mathbb R$, one can define an invertible object $\mathbb S(r)\in \mathbb W$ whose underlying object is $\mathbb S$, with
\[
\mathrm{Fil}_{r'} \mathbb S(r) = \mathbb S\ ,\ r'\geq r\ ,
\]
while $\mathrm{Fil}_{r'} \mathbb S(r) = 0$ for $r'<r$. The unit $\mathbb S=\mathbb S(0)$ in $\mathbb W$ is not compact, as it can be written as a colimit of $\mathbb S(r)$ over $r>0$. This colimit, computed naively in $\mathbb R$-filtered spectra, is not continuous at $0$, but enforcing continuity at $0$ yields $\mathbb S(0)$.

\begin{definition} For any topological space $X$, the symmetric monoidal $\infty$-category of wild sheaves on $X$ is
\[
\mathcal D(X,\mathbb W)\cong \mathcal D(X,\mathbb{S})\otimes \mathbb W,
\]
the presentable symmetric monoidal stable $\infty$-category of $\mathbb W$-sheaves on $X$. Concretely, this is the symmetric monoidal $\infty$-category of completely and continuously $\mathbb R$-filtered sheaves, i.e. $\mathbb R$-filtered sheaves $(\mathrm{Fil}_r \mathcal M)_r$ satisfying
\begin{enumerate}
\item[{\rm (i)}] Completeness: $\mathrm{lim}_{r\to -\infty} \mathrm{Fil}_r \mathcal M=0$.
\item[{\rm (ii)}] Continuity: For all $r$, the map $\mathrm{Fil}_r \mathcal M\to \mathrm{lim}_{r'>r} \mathrm{Fil}_{r'} \mathcal M$ is an equivalence.
\end{enumerate}
\end{definition}

\begin{remark} This construction is equivalent to a construction of Tamarkin \cite[2.2]{Tamarkin}, often denoted $\mathcal T$ in the literature; see for example \cite{APT} for a discussion of various equivalent incarnations. In the literature on the irregular Riemann--Hilbert correspondence \cite{DAgnoloKashiwara}, the construction is known as ``enhanced sheaves''.
\end{remark}

The embedding $\mathrm{Sp}\subset \mathbb W$ induces an embedding
\[
\mathcal D(X,\mathbb{S})\subset \mathcal D(X,\mathbb W)
\]
of sheaves into wild sheaves. On locally compact Hausdorff spaces, the usual formalism of six operations extends to wild sheaves, by repeating the usual constructions, for example using the general formalism developed by Heyer--Mann \cite{HeyerMann}. More precisely, we use the classes $I$ of open immersions and $P$ of proper maps, and declare $f_!$ to be left adjoint to $f^\ast$ for $f\in I$ an open immersion, and right adjoint to $f^\ast$ for $f\in P$ a proper map. The general extension results for $6$-functor formalisms, going back to Liu--Zheng and Gaitsgory--Rozenblyum, show that this gives a well-defined $6$-functor formalism. More concretely, $f_!$ is given by applying it naively on each $\mathrm{Fil}_r$, and then enforcing completeness and continuity of the resulting $\mathbb R$-filtered sheaf of spectra.

There is a functor of $(\infty,2)$-categories from the category of kernels for usual sheaves towards the category of kernels for wild sheaves, and in particular all suave or prim objects stay so in the context of wild sheaves. For example, Poincar\'e duality continues to hold, with the same dualizing objects.

\section{The exponential local system}

The family $r\mapsto \mathbb S(r)$ is continuously varying:

\begin{proposition} There is a (unique) invertible object
\[
\mathrm{exp}\in \mathcal D(\mathbb R,\mathbb W)
\]
with underlying sheaf $\mathbb S$, whose fibre at $r\in \mathbb R$ is $\mathbb S(r)$.

More generally, for any topological space $X$ and any continuous function $f: X\to \mathbb R$, there is a unique invertible object $\mathbb S(f)$ in $\mathcal D(X,\mathbb W)$ with underlying sheaf $\mathbb S$ whose fibre at $x\in X$ is $\mathbb S(f(x))$. One has $\mathbb S(f)\otimes \mathbb S(g)\cong \mathbb S(f+g)$, and in particular the inverse of $\mathbb S(f)$ is given by $\mathbb S(-f)$.
\end{proposition}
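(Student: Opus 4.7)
The plan is to construct $\mathbb S(f)$ explicitly and then prove uniqueness by a twisting reduction to the trivial case.

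\textbf{Construction and multiplicativity.} I would describe $\mathbb S(f)$ by giving its strict filtration $\mathrm{Fil}_{<r}$, which by the continuity axiom determines the whole object. For each $r \in \mathbb R$, set $U_r := \{x \in X : f(x) < r\}$, open by continuity of $f$, with open inclusion $j_r : U_r \hookrightarrow X$, and define
\[
\mathrm{Fil}_{<r} \mathbb S(f) := (j_r)_! \mathbb S_{U_r},
\]
with transition maps induced by $U_r \subset U_{r'}$ for $r \leq r'$. Continuity reduces to $(j_r)_! \mathbb S_{U_r} = \mathrm{colim}_{r' < r}(j_{r'})_! \mathbb S_{U_{r'}}$, which holds because $U_r = \bigcup_{r' < r} U_{r'}$ and $j_!$ commutes with expanding unions of opens. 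Completeness, the identification of the underlying sheaf as $\mathbb S_X$, and the fibre formula $\mathbb S(f)_x = \mathbb S(f(x))$ follow from the stalk computation: the stalk of $\mathrm{Fil}_{<r}\mathbb S(f)$ at $x$ is $\mathbb S$ iff $r > f(x)$. Multiplicativity $\mathbb S(f) \otimes \mathbb S(g) \cong \mathbb S(f+g)$ follows from the Day-convolution formula for the filtered tensor product, combined with the open-immersion identity $(j_U)_!\mathbb S_U \otimes (j_V)_!\mathbb S_V = (j_{U \cap V})_!\mathbb S_{U \cap V}$ and the set-theoretic equality $\bigcup_{r_1+r_2<r}\bigl(\{f<r_1\}\cap\{g<r_2\}\bigr) = \{f+g<r\}$; setting $g = -f$ yields invertibility.

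\textbf{Uniqueness.} Given any other invertible $\mathcal L$ with underlying sheaf $\mathbb S_X$ and fibres $\mathbb S(f(x))$, form $\mathcal L' := \mathcal L \otimes \mathbb S(-f)$, which is invertible with underlying sheaf $\mathbb S_X$ and all fibres $\mathbb S(0) = \mathbb S$. For each $r$, the canonical map from $\mathrm{Fil}_{<r}\mathcal L'$ to the underlying sheaf $\mathbb S_X$ is stalkwise the identity $\mathbb S \to \mathbb S$ when $r > 0$ and stalkwise zero when $r \leq 0$. By hypercompleteness of $\mathcal D(X,\mathrm{Sp})$, this promotes to $\mathrm{Fil}_{<r}\mathcal L' \cong \mathbb S_X$ for $r > 0$ and $\mathrm{Fil}_{<r}\mathcal L' = 0$ for $r \leq 0$, matching the unit filtration of $\mathbb S_X \in \mathcal D(X,\mathbb W)$; the transition maps are forced to be the correct ones since they factor through the structure maps to the underlying sheaf. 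Hence $\mathcal L' \cong \mathbb S_X$ in $\mathcal D(X,\mathbb W)$ and $\mathcal L \cong \mathbb S(f)$.

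\textbf{Main obstacle.} The crux is the uniqueness step: promoting pointwise identifications to global ones uses hypercompleteness of the underlying sheaf category (standard for locally compact Hausdorff $X$), together with the observation that the structure maps to the underlying sheaf rigidify the identification of each filtration piece, so that the transitions between them come out automatically. Existence and multiplicativity are, by contrast, essentially formal once the formula for $\mathrm{Fil}_{<r}$ is in place.
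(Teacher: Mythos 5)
Your proposal is correct and follows essentially the same route as the paper: the same construction $\mathrm{Fil}_{<r}\mathbb S(f)=j_{r!}\mathbb S$ on the opens $\{f<r\}$ with continuity from the exhaustion $U_r=\bigcup_{r'<r}U_{r'}$, multiplicativity checked on stalks (your Day-convolution bookkeeping is just a more explicit version of the paper's ``natural maps from the construction''), and uniqueness by twisting down to a sheaf with constant fibre $\mathbb S(0)$ and identifying each filtration step stalkwise. The only cosmetic difference is that you invoke hypercompleteness explicitly where the paper simply says ``by comparing stalks''; this is the same implicit point, not a different argument.
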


In particular, for any ``variety with potential'' $(X,f:X\to \mathbb A^1)$, we get a wild sheaf $\mathbb S(\mathrm{Re}(f))\in \mathcal D(X(\mathbb C),\mathbb W)$. This is closely related to the theory of exponential motives and rapid decay cohomology, cf.~e.g.~\cite{FresanJossen}.

\begin{proof} First, we prove uniqueness. If there are two such sheaves, then their ratio gives some invertible sheaf $\mathbb S(0)'$ with underlying sheaf $\mathbb S$, and all of whose fibres are $\mathbb S(0)$. But then each $\mathrm{Fil}_r \mathbb S(0)'$ is determined to be $\mathbb S$ or $0$ according to $r\geq 0$ or $r<0$, by comparing stalks.

For existence, we define $\mathbb S(f)$ with underlying sheaf $\mathbb S$ by its filtration
\[
\mathrm{Fil}_{<r} \mathbb S(f) = j_{r!} \mathbb S
\]
where $j_r: U_r\hookrightarrow X$ is the open immersion of
\[
U_r=\{x\in X\mid f(x)< r\}\subset X.
\]
It is clear this satisfies the continuity condition
\[
\mathrm{Fil}_{<r} \mathbb S(f) = \mathrm{colim}_{r'<r} \mathrm{Fil}_{<r'} \mathbb S(f),
\]
so setting
\[
\mathrm{Fil}_r \mathbb S(f) = \mathrm{lim}_{r'>r} \mathrm{Fil}_{<r'} \mathbb S(f)
\]
yields a completely and continuously $\mathbb R$-filtered sheaf, with the correct stalks. Concretely,
\[
\mathrm{Fil}_r \mathbb S(f) = i_{r\ast} i_r^! \mathbb S
\]
where $i_r: Z_r\hookrightarrow X$ is the closed immersion of
\[
Z_r=\{x\in X\mid f(x)\leq r\}\subset X.
\]
One gets natural maps $\mathbb S(f)\otimes \mathbb S(g)\to \mathbb S(f+g)$ from the construction, and they are isomorphisms by checking on stalks. In particular, $\mathbb S(f)$ is invertible with inverse $\mathbb S(-f)$.
\end{proof}

Concretely, both $\mathrm{Fil}_{<r} \mathrm{exp}$ and $\mathrm{Fil}_r \mathrm{exp}$ are the $!$-extension of the constant sheaf on $(-\infty,r)$.

\begin{corollary} The sheaf $\mathrm{exp}$ is additive, i.e.~the pullback of $\mathrm{exp}$ under the addition map $\mathbb R\times \mathbb R\xrightarrow{+}\mathbb R$ is $\mathrm{exp}\boxtimes \mathrm{exp}$ (uniquely in a way compatible with underlying sheaves).
\end{corollary}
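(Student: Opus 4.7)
The plan is to reduce the additivity of $\mathrm{exp}$ to the multiplicativity statement $\mathbb S(f)\otimes \mathbb S(g)\cong \mathbb S(f+g)$ already established in the proposition. The key observation is that $\mathrm{exp}=\mathbb S(\mathrm{id}_{\mathbb R})$ by construction, and that the assignment $f\mapsto \mathbb S(f)$ is visibly natural in pullbacks along continuous maps. Once both properties are in hand, the corollary is essentially a one-line computation.

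First, I would verify the pullback compatibility: for any continuous $g:Y\to X$ and continuous $f:X\to \mathbb R$, one has $g^\ast \mathbb S(f)\cong \mathbb S(f\circ g)$. This is immediate from the formula $\mathrm{Fil}_{<r}\mathbb S(f)=j_{r!}\mathbb S$, since $g^{-1}(\{f<r\})=\{f\circ g<r\}$ and $!$-extension from open immersions commutes with pullback; continuity and completeness are then preserved by the construction. Applied to $X=\mathbb R$, $f=\mathrm{id}$, and $g$ equal to the addition map $+:\mathbb R\times\mathbb R\to\mathbb R$ or the two projections $p_1,p_2:\mathbb R\times\mathbb R\to\mathbb R$, this gives $+^\ast\mathrm{exp}\cong \mathbb S(p_1+p_2)$ and $p_i^\ast \mathrm{exp}\cong \mathbb S(p_i)$.

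Then I would invoke the tensor identity of the proposition for the pair of continuous functions $p_1,p_2$ on $\mathbb R\times \mathbb R$:
\[
\mathrm{exp}\boxtimes\mathrm{exp}=p_1^\ast\mathrm{exp}\otimes p_2^\ast\mathrm{exp}\cong \mathbb S(p_1)\otimes\mathbb S(p_2)\cong \mathbb S(p_1+p_2)\cong +^\ast\mathrm{exp},
\]
which is exactly the required isomorphism. Uniqueness up to a compatibility with underlying sheaves is immediate from the uniqueness clause of the proposition: the ratio of any two such identifications would be an automorphism of $\mathbb S(p_1+p_2)$ that is trivial on the underlying sheaf and on fibres, and hence trivial on the filtration by the same stalk-by-stalk argument used in the proof of the proposition.

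I do not expect any genuine obstacle: the only thing one has to be honest about is that the pullback formula $g^\ast \mathbb S(f)\cong \mathbb S(f\circ g)$ is compatible with the isomorphisms $\mathbb S(f)\otimes \mathbb S(g)\cong \mathbb S(f+g)$, which is straightforward because both constructions are given by explicit $!$-extensions from sublevel sets and the relevant diagrams of open sets commute strictly. Hence the naturality needed to upgrade the fibrewise statement to a global isomorphism on $\mathbb R\times\mathbb R$ is automatic.
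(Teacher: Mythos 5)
Your proof is correct and is essentially the paper's argument: the paper's entire proof is that the corollary ``follows directly from $\mathbb S(f)\otimes \mathbb S(g)\cong \mathbb S(f+g)$'', and you have simply spelled out the implicit steps (pullback compatibility $g^\ast\mathbb S(f)\cong \mathbb S(f\circ g)$, specialization to the projections $p_1,p_2$ on $\mathbb R\times\mathbb R$, and uniqueness via the stalkwise argument from the proposition).
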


\begin{proof} This follows directly from $\mathbb S(f)\otimes \mathbb S(g)\cong \mathbb S(f+g)$.
\end{proof}

We will need the following computation. This vanishing is the essential non-triviality property of the exponential local system.

\begin{proposition}\label{prop:cohomologyofexp} Under the functor of compactly supported cohomology
\[
\pi_!: \mathcal D(\mathbb R,\mathbb W)\to \mathbb W
\]
one has
\[
\pi_! \mathrm{exp} = 0.
\]
\end{proposition}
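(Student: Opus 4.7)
The plan is to exploit the recipe that $\pi_!$ on wild sheaves is computed by applying $\pi_!$ of ordinary spectra naively at each filtration step and then enforcing completeness and continuity. I will show that the naive levelwise answer is the constant $\mathbb{R}$-filtered spectrum at $\mathbb{S}[-1]$, and then observe that completion annihilates every constant diagram.

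For the levelwise computation I would use the formula $\mathrm{Fil}_{<r}\,\mathrm{exp} = j_{r!}\mathbb{S}$ established in the proof of the preceding proposition, with $j_r\colon (-\infty,r)\hookrightarrow \mathbb{R}$. For every finite $r\in\mathbb{R}$ this gives
\[
\pi_!\,\mathrm{Fil}_{<r}\,\mathrm{exp} = (\pi j_r)_! \mathbb{S} \simeq \mathbb{S}[-1],
\]
since $(-\infty,r)\cong\mathbb{R}$ has compactly supported cohomology concentrated in degree $1$. To identify the transition map for $r<r'$, I would invoke the open/closed cofiber sequence
\[
(\pi j_r)_! \mathbb{S} \to (\pi j_{r'})_! \mathbb{S} \to C^{*}_c([r,r');\mathbb{S})
\]
and note that its third term vanishes, as the one-point compactification of the half-open interval $[r,r')$ is the contractible closed interval $[r,r']$. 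Consequently the naive $\mathbb{R}$-filtered spectrum $(\pi_!\,\mathrm{Fil}_{<r}\,\mathrm{exp})_r$ is the constant diagram at $\mathbb{S}[-1]$.

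This constant diagram is continuous (colimits of constants are constant) but fails completeness, since $\lim_{r\to-\infty}\mathbb{S}[-1]=\mathbb{S}[-1]\neq 0$. Because the completion functor is left adjoint to the inclusion of complete filtered spectra into all filtered spectra, to conclude $\pi_!\,\mathrm{exp}=0$ it suffices to show that any morphism from the constant diagram $\mathrm{const}(M)$ into a complete $Q\in\mathbb{W}$ is zero. But by the adjunction $\mathrm{const}\dashv \lim$ in functor categories, such a morphism is the same as a map $M\to \lim_{r\to-\infty} Q_r=0$. Thus $\mathrm{const}(\mathbb{S}[-1])$ completes to zero in $\mathbb{W}$, and $\pi_!\,\mathrm{exp}=0$.

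The only real subtlety is keeping straight the recipe defining $\pi_!$ on $\mathcal{D}(\mathbb{R},\mathbb{W})$; once that is pinned down, each individual step is elementary (the half-open interval computation is standard, and the annihilation of constant diagrams under completion is a formal consequence of the adjunction). Philosophically, the vanishing encoded here is precisely the discrepancy between $\pi_!$ valued in ordinary spectra, where one would read off $\mathbb{S}[-1]$, and $\pi_!$ valued in $\mathbb{W}$, where completion crushes this constant part — which is exactly the essential nontriviality that will allow $\mathrm{exp}$ to serve as a Fourier kernel.
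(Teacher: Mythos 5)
Your proposal is correct and follows essentially the same route as the paper: compute $\pi_!$ levelwise on the filtration, observe each term is $\mathbb{S}[-1]$ with all transition maps equivalences, and conclude that enforcing completeness kills the resulting essentially constant diagram. Your added details (the cofiber sequence identifying the transition maps and the adjunction argument showing completion annihilates constant diagrams) are exactly the steps the paper leaves implicit.
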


\begin{proof} For any $r\in \mathbb R$, the sheaf $\mathrm{Fil}_r \mathrm{exp}$ is the $!$-extension of the constant sheaf on $(-\infty,r)$, and hence its compactly supported cohomology is given by $\mathbb R[-1]$. But this is constant for all $r$ (with transition maps isomorphisms), so enforcing completeness to get an object of $\mathbb W$ kills it.
\end{proof}

\section{The Fourier transform}

Now let $V$ be any real vector space and let $V^\ast$ be its dual vector space. We get the pairing
\[
b: V\times V^\ast\to \mathbb R
\]
and hence the object
\[
\mathcal K_V:=b^\ast \mathrm{exp}\in \mathcal D(V\times V^\ast,\mathbb W)
\]
which one can then use to define the Fourier transform
\[
\mathcal F_V = p_{V^\ast!}(p_V^\ast\otimes \mathcal K_V): \mathcal D(V,\mathbb W)\to \mathcal D(V^\ast,\mathbb W).
\]
The following theorem is originally due to Tamarkin \cite[Theorem 3.5]{Tamarkin}. 

\begin{theorem} The functor $\mathcal F_V$ is an equivalence whose inverse is $(-1)^\ast \mathcal F_{V^\ast}[d]$.
\end{theorem}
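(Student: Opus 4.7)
The plan is to compute the composition $\mathcal F_{V^*}\circ \mathcal F_V$ explicitly and show it is isomorphic to $(-1)^*[-d]$, after which the formula for the inverse is immediate. First I would unwind definitions using base change along the Cartesian square expressing $V\times V^*\times V$ as the fibre product of $V\times V^*$ and $V^*\times V$ over $V^*$, together with the projection formula, to rewrite
\[
\mathcal F_{V^*}\mathcal F_V(M)\cong \pi_{3!}\bigl(\pi_1^*M\otimes b_{12}^*\mathrm{exp}\otimes b_{23}^*\mathrm{exp}\bigr),
\]
where $\pi_1,\pi_3: V\times V^*\times V\to V$ are the outer projections and $b_{ij}$ is the pairing evaluated on the $i$-th and $j$-th factors.

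The key algebraic simplification uses the additivity $\mathbb S(f)\otimes \mathbb S(g)\cong \mathbb S(f+g)$ from the preceding Corollary to combine $b_{12}^*\mathrm{exp}\otimes b_{23}^*\mathrm{exp}=(b_{12}+b_{23})^*\mathrm{exp}$, and to notice that $b_{12}+b_{23}=b\circ (m,\mathrm{id}_{V^*})$ where $m: V\times V\to V$ is addition. A second base change along the Cartesian square relating $(m,\mathrm{id}_{V^*})$ with the projection $p_V: V\times V^*\to V$ and $m: V\times V\to V$, followed by a further projection formula, collapses the composition to
\[
\mathcal F_{V^*}\mathcal F_V(M)\cong p_{2!}\bigl(p_1^*M\otimes m^*K\bigr),\qquad K:=p_{V!}\,b^*\mathrm{exp}\in \mathcal D(V,\mathbb W).
\]

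The crux of the proof is then to establish $K\cong i_{0!}\mathbb S[-d]$, a skyscraper at $0\in V$. Fibrewise, for $v\neq 0$ the restriction of $b$ to $\{v\}\times V^*$ is the surjective linear form $L_v=\langle v,-\rangle$; choosing any splitting $V^*\cong\ker L_v\oplus \mathbb R$ presents $L_v^*\mathrm{exp}$ as an external product of a constant sheaf with $\mathrm{exp}$ on $\mathbb R$, so K\"unneth together with Proposition~\ref{prop:cohomologyofexp} forces vanishing of the fibre of $K$ there. At $v=0$, $L_0=0$ gives $L_0^*\mathrm{exp}=\mathbb S_{V^*}$ with compactly supported cohomology $\mathbb S[-d]$. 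I expect this step to be the main obstacle: since $p_{V!}$ on $\mathbb W$-sheaves is defined as ``naive level-wise pushforward followed by completion and continuity'', one must carefully verify that the completeness condition $\lim_{r\to-\infty}\mathrm{Fil}_r=0$ is what actually kills the spurious constant contribution to the naive fibre over nonzero $v$, while at the origin the naive answer is already the complete continuous filtered spectrum $\mathbb S(0)[-d]$.

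Once $K$ is identified, $m^*K$ is the $!$-extension of $\mathbb S[-d]$ along the antidiagonal $\Delta^-=\{(v,-v)\}\subset V\times V$, and the final projection formula for the closed immersion $\Delta^-\hookrightarrow V\times V$, together with $p_1|_{\Delta^-}\cong \mathrm{id}_V$ and $p_2|_{\Delta^-}\cong (-1)_V$, yields $\mathcal F_{V^*}\mathcal F_V(M)\cong (-1)^*M[-d]$. Running the same argument with the roles of $V$ and $V^*$ exchanged gives $\mathcal F_V\mathcal F_{V^*}\cong (-1)^*[-d]$ on $\mathcal D(V^*,\mathbb W)$; combined with the easily verified compatibility $\mathcal F_V\circ (-1)^*_V \cong (-1)^*_{V^*}\circ \mathcal F_V$, this confirms that $(-1)^*\mathcal F_{V^*}[d]$ is a two-sided inverse of $\mathcal F_V$.
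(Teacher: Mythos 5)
Your proposal is correct and follows essentially the same route as the paper: the paper phrases the computation as a composition of kernels, but the substance is identical — additivity of $\mathrm{exp}$ reduces everything to identifying $p_{V!}b^\ast\mathrm{exp}$ fibrewise as a skyscraper $\mathbb S[-d]$ at $0$, using K\"unneth and Proposition~\ref{prop:cohomologyofexp} for $v\neq 0$, whence the composite is $(-1)^\ast[-d]$. The completeness worry you flag is exactly what the proof of Proposition~\ref{prop:cohomologyofexp} already handles, so no extra verification is needed beyond what you cite.
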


\begin{proof} The statement is already true on the level of kernels: We have $V\xrightarrow{\mathcal K_V} V^\ast$ and $V^\ast\xrightarrow{\mathcal K_{V^\ast}} V$. Their composite is given by the kernel
\[
p_{13!}(p_{12}^\ast \mathcal K_V\otimes p_{23}^\ast \mathcal K_{V^\ast})\in \mathcal D(V\times V,\mathbb W).
\]
Using additivity of $\mathrm{exp}$, this is the pullback under $V\times V\xrightarrow{+} V$ of
\[
p_{V!} b^\ast \mathrm{exp}\in \mathcal D(V,\mathbb W).
\]
For $v\in V$, the fibre of this is given by the compactly supported cohomology of $V^\ast$ with coefficients in the pullback of $\mathrm{exp}$ under $\langle v,-\rangle: V^\ast\to \mathbb R$. If $v\neq 0$, then we can write $V^\ast=\mathbb R\oplus W$ so that $\langle v,-\rangle$ is given by projection to the first factor. Using the usual K\"unneth for compactly supported cohomology and Proposition~\ref{prop:cohomologyofexp}, we see that this compactly supported cohomology vanishes. If $v=0$, then the sheaf is the unit $\mathbb S(0)$, and the compactly supported cohomology is $\mathbb S(0)[-d]$ where $d=\mathrm{dim} V$. Thus, the composition
\[
V\xrightarrow{\mathcal K_V} V^\ast\xrightarrow{\mathcal K_{V^\ast}} V
\]
is given by the kernel encoding $(-1)^\ast[-d]$, which is an equivalence. The same is true for the composition in the other direction, yielding the claim.
\end{proof}

As usual, one can also show that $\mathcal F_V$ intertwines the convolution symmetric monoidal structure on $\mathcal D(V,\mathbb W)$ with the usual tensor product symmetric monoidal structure on $\mathcal D(V^\ast,\mathbb W)$.

Moreover, the Fourier transform is closely related to the Legendre transform.

\begin{proposition} If $V=\mathbb R$ and $f: \mathbb R\to \mathbb R$ is a convex function, with unbounded derivatives when $x\to \pm \infty$, then the Fourier transform of $\mathbb S(f)$ is given by $\mathbb S(f^\ast)[-1]$ where $f^\ast$ is the Legendre transform of $f$, taking any $y\in \mathbb R$ to the infimum of
\[
\{f(x)+xy, x\in \mathbb R\}
\]
(which is well-defined by our assumption on $f$).
\end{proposition}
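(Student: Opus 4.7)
The plan is to compute $\mathcal{F}_V(\mathbb{S}(f))$ level by level on the defining $\mathbb{R}$-filtration and match it to $\mathbb{S}(f^*)[-1]$. First I would invoke the compatibility $\mathbb{S}(\phi) \otimes \mathbb{S}(\psi) \cong \mathbb{S}(\phi+\psi)$ from the previous section, applied to the functions $\phi(x,y) = f(x)$ and $\psi(x,y) = xy$ on $\mathbb{R}^2 = V \times V^*$, to identify $p_V^* \mathbb{S}(f) \otimes \mathcal{K}_V \cong \mathbb{S}(f(x)+xy)$, so that $\mathcal{F}_V(\mathbb{S}(f)) = p_{V^*!}\mathbb{S}(f(x)+xy)$. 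The explicit formula for the filtration of $\mathbb{S}(g)$ then gives $\mathrm{Fil}_{<r}\mathbb{S}(f(x)+xy) = \tilde j_{r!}\mathbb{S}$, where $\tilde j_r: \Omega_r \hookrightarrow \mathbb{R}^2$ is the open inclusion of $\Omega_r = \{(x,y) : f(x)+xy<r\}$.

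Next I would analyze the geometry of $p_{V^*}$ restricted to $\Omega_r$. The hypothesis on $f$ makes $g_y(x) := f(x)+xy$ convex in $x$ with $g_y(x) \to +\infty$ as $x \to \pm\infty$, so the $y$-fibre $\Omega_r \cap p_{V^*}^{-1}(y)$ is a bounded open interval $(a(y), b(y))$ when $r > f^*(y)$ and is empty otherwise. Hence $p_{V^*}(\Omega_r) = U_r := \{y : f^*(y) < r\}$, and writing $q: \Omega_r \to U_r$ for the induced surjection and $j_{U_r}: U_r \hookrightarrow \mathbb{R}$ for the open inclusion,
\[
p_{V^*!}\tilde j_{r!}\mathbb{S} = j_{U_r!}(q_! \mathbb{S}_{\Omega_r}).
\]
The main computation is then $q_! \mathbb{S}_{\Omega_r} \cong \mathbb{S}_{U_r}[-1]$; combined with the previous display this yields $\mathrm{Fil}_{<r}\mathcal{F}_V(\mathbb{S}(f)) = j_{U_r!}\mathbb{S}[-1] = \mathrm{Fil}_{<r}(\mathbb{S}(f^*)[-1])$, naturally in $r$. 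Completeness ($\bigcap_r U_r = \emptyset$, since $f^*$ is everywhere finite) and continuity ($U_r = \bigcup_{r'<r} U_{r'}$) of the resulting filtration are automatic, so no further $\mathbb{W}$-completion is needed and $\mathcal{F}_V(\mathbb{S}(f)) \cong \mathbb{S}(f^*)[-1]$ follows.

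The main obstacle I foresee is upgrading the fibrewise statement $H^*_c((a(y),b(y)),\mathbb{S}) = \mathbb{S}[-1]$ to the global identification $q_! \mathbb{S}_{\Omega_r} \cong \mathbb{S}_{U_r}[-1]$. My plan is to exploit the canonical orientation of the fibres (coming from the $V$-direction) together with the convex-analytic fact that $a(y), b(y)$ vary continuously in $y \in U_r$, which makes $q$ a trivializable oriented open-interval bundle; alternatively, one can use that the fibrewise-convex straight-line homotopy $(x, y, t) \mapsto (tx + (1-t)(a(y)+b(y))/2, y)$ stays in $\Omega_r$, deformation-retracting it onto the midpoint section $U_r \hookrightarrow \Omega_r$ and reducing the computation to the Künneth identity $q_!\mathbb{S}_{U_r \times (0,1)} = \mathbb{S}_{U_r} \otimes H^*_c((0,1),\mathbb{S})$.
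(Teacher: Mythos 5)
Your proposal is correct and is essentially the paper's own argument: the paper likewise reduces everything to the fact that the sublevel sets $\{x \mid f(x)+xy<r\}$ are open intervals (empty precisely when $r\le f^\ast(y)$) whose compactly supported cohomology is a shifted unit, the only difference being that the paper carries out this computation fibrewise at each $y\in V^\ast$, while you globalize it in $r$ by identifying $\mathrm{Fil}_{<r}$ of the pushforward with $j_{U_r!}\mathbb{S}[-1]$ via the interval bundle $q:\Omega_r\to U_r$. That globalization is a fine (and slightly more complete) packaging; just note that your alternative argument by fibrewise deformation retraction onto the midpoint section does not by itself compute $q_!$, since compactly supported pushforward is not homotopy-invariant, so you should rely on the trivialization $\Omega_r\cong U_r\times(0,1)$ (or the canonical fibre orientation) together with Künneth, as in your primary argument.
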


Note that with our convention, $f^\ast$ is concave; the Fourier transform of $\mathbb S(f^\ast)$ will then stay in degree $0$.

\begin{proof} The value of the Fourier transform of $f$ at a point $y\in \mathbb R$ is given by the $\mathbb R$-filtered spectrum whose $<r$-th term is the compactly supported cohomology of the open subspace
\[
\{x\mid f(x) + xy < r\}\subset \mathbb R.
\]
This subset is always connected or empty by convexity, so homeomorphic to either $\mathbb R$ or $\emptyset$. Moreover, it is empty for $r$ sufficiently small (as $f$ has unbounded derivatives), and nonempty when $r$ is sufficiently large. It follows that this $\mathbb R$-filtered spectrum is of the form $\mathbb S(g)[-1]$ for some $g=g(y)$, and unraveling definitions, this is the Legendre transform.
\end{proof}

In general, when $f$ is nonconvex, the Fourier transform of $\mathbb S(f)$ is a noninvertible sheaf, and its fibre at $0$ has contributions from all the local extrema of $f$.

\section{Universality of $\mathbb W$}

We see that with the choice of $\mathbb W$ used above, things work. This raises the question whether other choices of $\mathbb W$ would have been possible. Note that we certainly need a family of invertible objects $\mathbb S(r)$ for $r\in \mathbb R$, as the fibres of the exponential local system on $\mathbb R$. However, in $\mathbb W$ we additionally have maps $\mathbb S(r)\to \mathbb S(r')$ when $r\geq r'$. These are not forced on us: Indeed, at the very least we could swap the direction of the filtration, and have such maps instead when $r\leq r'$.

We will show here that in fact up to this change in direction of the filtration, $\mathbb W$ is universal, using some ideas of Vaintrob \cite{Vaintrob}. In the process, we get a different presentation of $\mathbb W$, showcasing its relation to the Novikov ring. Related ideas have also recently been expressed in \cite[Section 4.4]{Efimov}, \cite{APT}.

For any presentable symmetric monoidal $\infty$-category $\mathcal C$, we might hope to find an invertible
\[
\mathcal L\in \mathcal D(\mathbb R,\mathcal C) = \mathcal D(\mathbb R,\mathbb{S})\otimes \mathcal C
\]
that is moreover equipped with an isomorphism $\mathrm{add}^\ast L\cong L\boxtimes L$ and suitable coherence data. (This does, in fact, guarantee invertibility, if one adds the unit isomorphism $0^\ast L\cong 1$.) The datum of $\mathcal L$ is in fact equivalent to the datum of a symmetric monoidal functor
\[
(\mathcal D(\mathbb R,\mathbb{S}),\star)\to \mathcal C
\]
where the source is endowed with the convolution product $\star$.

\begin{proposition}[{\cite[variant of Theorem 2]{Vaintrob}}] There is an equivalence of symmetric monoidal $\infty$-categories
\[
(\mathcal D(\mathbb R,\mathbb{S}),\star)\cong (\mathcal D_{\mathrm{qc}}(\tilde{\mathbb P}^{1,a}/\tilde{\mathbb G}_m),\otimes)
\]
where
\[
\tilde{\mathbb P}^1 = \varprojlim_{n,x\mapsto x^n} \mathbb P^1
\]
is the infinite root version of $\mathbb P^1$, with its action of the infinite root version
\[
\tilde{\mathbb G}_m = \varprojlim_{n,x\mapsto x^n} \mathbb G_m
\]
of the multiplicative group, and $\tilde{\mathbb P}^{1,a}$ is the almost scheme, with respect to the almost structure at $0$ and $\infty$ given by the infinite roots.
\end{proposition}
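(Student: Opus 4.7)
My plan is to construct the equivalence via the natural candidate functor sending the delta sheaf $\delta_r \in \mathcal D(\mathbb R, \mathbb S)$ at $r \in \mathbb R$ to the line bundle $\mathcal O(r)$ on $\tilde{\mathbb P}^{1,a}/\tilde{\mathbb G}_m$. On both sides this family of invertible objects is a continuous group homomorphism from $\mathbb R$ (additive) to the Picard spectrum, compatible with convolution resp.~tensor: $\delta_r \star \delta_{r'} \simeq \delta_{r+r'}$ and $\mathcal O(r) \otimes \mathcal O(r') \simeq \mathcal O(r+r')$. The key point is that real-indexed $\mathcal O(r)$ exist on the almost stack $\tilde{\mathbb P}^{1,a}/\tilde{\mathbb G}_m$ (and not merely $\mathbb Q$-indexed ones, as one would have on $\tilde{\mathbb P}^1/\tilde{\mathbb G}_m$ without the almost structure): the almost structure at $0$ and $\infty$ precisely interpolates the $\mathbb Q$-character lattice of $\tilde{\mathbb G}_m$ to all of $\mathbb R$.

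I would then reduce to an affine identification. Covering $\tilde{\mathbb P}^1$ by the almost-affine opens around $0$ and $\infty$ with overlap $\tilde{\mathbb G}_m$, quasicoherent descent yields
\[
\mathcal D_{\mathrm{qc}}(\tilde{\mathbb P}^{1,a}/\tilde{\mathbb G}_m) \simeq \mathcal D_{\mathrm{qc}}(\tilde{\mathbb A}^{1,a}_0/\tilde{\mathbb G}_m) \times_{\mathrm{Sp}} \mathcal D_{\mathrm{qc}}(\tilde{\mathbb A}^{1,a}_\infty/\tilde{\mathbb G}_m),
\]
using $\tilde{\mathbb G}_m/\tilde{\mathbb G}_m \simeq \mathrm{pt}$. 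The core step is then the affine identification $(\mathcal D(\mathbb R_{\geq 0}, \mathbb S), \star) \simeq (\mathcal D_{\mathrm{qc}}(\tilde{\mathbb A}^{1,a}_0/\tilde{\mathbb G}_m), \otimes)$ and its symmetric version at $\infty$. A $\tilde{\mathbb G}_m$-equivariant quasicoherent sheaf on $\tilde{\mathbb A}^1_0 = \mathrm{Spec}(\mathbb Z[T^q : q \in \mathbb Q_{\geq 0}])$ is a $\mathbb Q$-graded module; almostification with respect to the idempotent ideal $\mathfrak m = \bigcup_{q > 0}(T^q)$ forces continuity in $q$ and thereby upgrades the $\mathbb Q$-indexing to a continuous $\mathbb R_{\geq 0}$-filtration. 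On the sheaf side, any $F \in \mathcal D(\mathbb R_{\geq 0}, \mathbb S)$ has its canonical $\mathbb R_{\geq 0}$-indexed filtration $r \mapsto \Gamma_c([0, r), F)$, which is automatically complete and continuous, and one matches $\delta_r \leftrightarrow \mathcal O(r)$.

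The hard part will be verifying this affine equivalence. The main technical step is matching the mapping spectra: without almostification, $\Hom(\mathcal O(r), \mathcal O(r'))$ would include too many $\mathbb Q$-rational contributions, and one needs the almost ideal $\mathfrak m$ to collapse these to match $\Hom(\delta_r, \delta_{r'})$ on the sheaf side. A secondary subtlety is that convolution on $\mathcal D(\mathbb R, \mathbb S)$ does not respect the decomposition $\mathbb R = \mathbb R_{\geq 0} \cup \mathbb R_{\leq 0}$ naively (a positive plus a negative can have any sign), so to promote the two affine equivalences to a symmetric monoidal equivalence over the full stack it is cleanest to invoke a universal property: both sides should represent the functor sending a presentable symmetric monoidal $\infty$-category $\mathcal C$ to continuous group homomorphisms $\mathbb R \to \mathrm{Pic}(\mathcal C)$, and so agree by Yoneda.
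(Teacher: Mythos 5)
Your core affine step is not correct, and this is where the argument breaks. You propose $(\mathcal D(\mathbb R_{\geq 0},\mathbb S),\star)\simeq(\mathcal D_{\mathrm{qc}}(\tilde{\mathbb A}^{1,a}_0/\tilde{\mathbb G}_m),\otimes)$ with $\delta_r\mapsto\mathcal O(r)$, but this assignment cannot be fully faithful: on the stack side $\Hom(\mathcal O(r),\mathcal O(r'))$ is nonzero for all $r'$ on one side of $r$ (multiplication by the monomials $T^s$), and almostification does not kill these maps --- it only affects the boundary behaviour near $r'=r$ --- whereas $\Hom(\delta_r,\delta_{r'})=0$ whenever $r\neq r'$. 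Relatedly, your functor $F\mapsto\Gamma_c([0,r),F)$ lands only in filtrations that are complete (vanish as $r\to 0$), while $\mathcal D_{\mathrm{qc}}(\tilde{\mathbb A}^{1,a}_0/\tilde{\mathbb G}_m)$ is equivalent to \emph{all} continuously $\mathbb R$-filtered spectra, without any completeness condition (e.g.\ the pushforward of the structure sheaf from the open point $\tilde{\mathbb G}_m/\tilde{\mathbb G}_m$ gives the constant filtration, which is not in your image); completeness only enters later, in cutting out $\mathbb W$. The correct affine statement, as in the paper, identifies the chart with $\mathbb Q$-filtered spectra (graded modules), then with continuously $\mathbb R$-filtered spectra after almostification, and then with sheaves on \emph{all of} $\mathbb R$ with nonnegative singular support via $\mathcal F\mapsto R\Gamma_c((-\infty,r),\mathcal F)$; under this dictionary the line bundles $\mathcal O(r)$ correspond to half-line sheaves such as $\mathbb S_{[r,\infty)}$ (whose Homs are indeed one-sided), not to delta sheaves. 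Accordingly, the decomposition of $(\mathcal D(\mathbb R,\mathbb S),\star)$ matching the two charts is the microlocal one by sign of singular support (convolution preserves these subcategories), glued over local systems of spectra on $\mathbb R\simeq\mathrm{Sp}$ --- not the support decomposition $\mathbb R=\mathbb R_{\geq 0}\cup\mathbb R_{\leq 0}$, which is the Fourier-dual picture and, as you yourself observe, does not interact with convolution.

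Your fallback for the gluing --- that both sides corepresent ``continuous group homomorphisms $\mathbb R\to\mathrm{Pic}(\mathcal C)$'' and hence agree by Yoneda --- does not repair this. First, the universal property of $(\mathcal D(\mathbb R,\mathbb S),\star)$ is about additive \emph{invertible sheaves} $\mathcal L\in\mathcal D(\mathbb R,\mathcal C)$ (continuity is encoded sheaf-theoretically), not about pointwise families $r\mapsto\mathcal L_r$ with some topology on $\mathrm{Pic}(\mathcal C)$; the delta objects $\delta_r$ do not vary continuously in any naive sense. Second, showing that $\mathcal D_{\mathrm{qc}}(\tilde{\mathbb P}^{1,a}/\tilde{\mathbb G}_m)$ corepresents that same functor is essentially the content of the proposition itself, so invoking it is circular unless you prove both corepresentability statements --- which would again require the chart-by-chart identifications above.
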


\begin{proof}[Sketch] This can be glued from several equivalences. In fact, quasicoherent sheaves on $\tilde{\mathbb A}^{1,a}/\tilde{\mathbb G}_m$ are equivalent to sheaves on $\mathbb R$ with nonnegative singular support (cf.~below), and similarly for the complementary $\tilde{\mathbb A}^{1,a}$ and nonpositive singular support. The two charts meet in $\tilde{\mathbb G}_m/\tilde{\mathbb G}_m=\ast$, and quasicoherent sheaves are just spectra, which are also equivalent to local systems of spectra on $\mathbb R$.

For the equivalence of quasicoherent sheaves on $\tilde{\mathbb A}^{1,a}/\tilde{\mathbb G}_m$ with sheaves on $\mathbb R$ with nonnegative singular support, one first recalls that quasicoherent sheaves on $\tilde{\mathbb A}^1/\tilde{\mathbb G}_m$ are equivalent to $\mathbb Q$-filtered spectra. Passing to the almost category gives the equivalence with continuously $\mathbb R$-filtered spectra (noting that continuity lets one pass from $\mathbb Q$- to $\mathbb R$-indexed filtrations uniquely). Now $\mathbb R$-filtered spectra are equivalent to sheaves on $\mathbb R$ with nonnegative singular support by taking a sheaf $\mathcal F$ to the system of $R\Gamma_c((-\infty,r),\mathcal F)$. 
\end{proof}

Thus, possible candidates for an exponential local system $\mathcal L$ with coefficients in $\mathcal C$ are parametrized by symmetric monoidal functors
\[
\mathcal D_{\mathrm{qc}}(\tilde{\mathbb P}^{1,a}/\tilde{\mathbb G}_m)\to \mathcal C.
\]
In order for this to induce a Fourier equivalence, one needs the vanishing of Proposition~\ref{prop:cohomologyofexp}, and this means in particular that $\mathcal L$ can nowhere be trivial. Thus, $\mathcal C$ must be trivial after restriction to
\[
\ast=\tilde{\mathbb G}_m/\tilde{\mathbb G}_m\subset \tilde{\mathbb P}^{1,a}/\tilde{\mathbb G}_m.
\]
But then $\mathcal C$ becomes linear over the resulting quotient category
\[
\mathcal D_{\mathrm{qc}}(\tilde{\mathbb P}^{1,a}/\tilde{\mathbb G}_m) / \mathcal D_{\mathrm{qc}}(\tilde{\mathbb G}_m/\tilde{\mathbb G}_m).
\]
But this decomposes into a product of two categories, one at $0$ and one at $\infty$. The category at $0$ is precisely the category $\mathbb W$ of completely and continuously $\mathbb R$-filtered spectra! The category at $\infty$ is the same, up to reversing filtrations. This proves universality of $\mathbb W$.

At the same time, this discussion identifies $\mathbb W$ with the subcategory of complete (at $0$) modules on the almost stack
\[
\tilde{\mathbb A}^{1,a}/\tilde{\mathbb G}_m.
\]
In this sense, the category $\mathbb W$ is the universal case of the construction of Example~\ref{ex:almostexp}. In particular, forgetting the $\tilde{\mathbb G}_m$-equivariance, this gives a symmetric monoidal functor from $\mathbb W$ to $T$-complete almost modules over $\mathbb S[[T^{1/\infty}]]$. This is a version of the Novikov ring. In particular, by base change, the results of this paper also apply to sheaves with coefficients in complete almost modules over the Novikov ring, or complete almost modules over any rank $1$ valuation ring.

\section{A canonical $\mathbb R_{>0}$-torsor over $\mathrm{Spec}(\mathbb Z)$}

In fact, this discussion proves more. Namely, if one extends the framework of algebraic geometry to encompass instead of commutative rings all presentable symmetric monoidal stable $\infty$-categories, then it turns out
\[
\mathrm{Spec}(\mathbb W)\to \mathrm{Spec}(\mathbb S)
\]
is a torsor under $\mathbb R_{>0,\mathrm{Betti}} := \mathrm{Spec}(\mathcal D(\mathbb R_{>0},\mathbb S))$. Namely, there is an action of $\mathbb R_{>0}$ on $\mathbb W$ via multiplicative rescaling of the filtration, encoded in a coaction
\[
\mathbb W\to \mathbb W\otimes \mathcal D(\mathbb R_{>0},\mathbb S) = \mathcal D(\mathbb R_{>0},\mathbb W)
\]
sending $(\mathrm{Fil}_r M)_r$ to the sheaf with stalk at $t\in \mathbb R_{>0}$ given by $(\mathrm{Fil}_{rt} M)_r$.

\begin{proposition} The preceding functor induces an equivalence
\[
\mathbb W\otimes_{\mathbb S}\mathbb W\cong \mathcal D(\mathbb R_{>0},\mathbb W).
\]
\end{proposition}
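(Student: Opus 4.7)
The map $\Phi:\mathbb W\otimes_{\mathbb S}\mathbb W\to \mathcal D(\mathbb R_{>0},\mathbb W)$ under consideration is the standard shear map for the coaction, namely the composite $(\mu_{\mathbb W}\otimes \mathrm{id})\circ(\mathrm{id}\otimes \rho)$; concretely it sends $M\otimes N$ to the sheaf $t\mapsto M\otimes_{\mathbb W} a_t^{\ast}(N)$ on $\mathbb R_{>0}$, where $a_t^{\ast}(N)$ denotes the filtration rescaled by $t$ (so $\mathrm{Fil}_r a_t^{\ast}(N) = \mathrm{Fil}_{rt} N$). On generators, $\mathbb S(r)\otimes \mathbb S(s)\mapsto \mathbb S(g_{r,s})$ with $g_{r,s}(t) = r + s/t$. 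My plan is to realize $\Phi$ as pull--push along an explicit correspondence of Euclidean spaces, take the reverse pull--push as a candidate inverse, and verify both composites via base change, with the vanishing $\pi_!\mathrm{exp}=0$ of Proposition~\ref{prop:cohomologyofexp} as the key geometric input.

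The first step is to present both sides via the sheaf-theoretic incarnations sketched in the Vaintrob argument of Section~5: $\mathbb W$ identifies with a subcategory of $\mathcal D(\mathbb R,\mathbb S)$ of complete sheaves with non-negative singular support, with $\mathbb S(r)$ corresponding to a delta-type sheaf at $r$ and the symmetric monoidal structure being convolution. Consequently $\mathbb W\otimes_{\mathbb S}\mathbb W$ becomes the corresponding subcategory of $\mathcal D(\mathbb R^2,\mathbb S)$ (non-negative singular support in each coordinate), while $\mathcal D(\mathbb R_{>0},\mathbb W)$ becomes a subcategory of $\mathcal D(\mathbb R_{>0}\times \mathbb R,\mathbb S)$ (non-negative in the $\mathbb R$-direction). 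In this language $\rho$ is pullback $m^{\ast}$ along the multiplication $m:\mathbb R_{>0}\times \mathbb R\to \mathbb R$, $(t,u)\mapsto tu$, consistent with $\rho(\delta_r)$ being supported on the hyperbola $\{(t,r/t)\}$, and $\Phi$ is pull--push $F_!\,p^{\ast}$ along the correspondence
\[
\mathbb R^2 \xleftarrow{\;p\;} \mathbb R^2\times \mathbb R_{>0} \xrightarrow{\;F\;} \mathbb R_{>0}\times \mathbb R,\qquad F(r,s,t) = (t,\, r + s/t),
\]
with $p$ the projection onto the first factor.

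The candidate inverse is $\Psi = p_!\,F^!$, the reverse pull--push (with the appropriate cohomological shift encoded in the upper-shriek). The composite $\Psi\Phi$ is then pull--push along the self-fibre product of $F$ over $\mathbb R_{>0}\times \mathbb R$, which is parameterized by $(r,s,t,\Delta r)\in \mathbb R^2\times \mathbb R_{>0}\times \mathbb R$ subject to $\Delta s = -t\,\Delta r$, with the two composed projections to $\mathbb R^2$ being $(r,s)$ and $(r+\Delta r,\,s - t\Delta r)$. The diagonal stratum $\Delta r = 0$ is an $\mathbb R_{>0}$-bundle over the diagonal $\mathbb R^2$, and fibrewise integration produces the identity (with the shift from $R\Gamma_c(\mathbb R_{>0}) = \mathbb S[-1]$ absorbing the $F^!$-shift in $\Psi$). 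The off-diagonal stratum, tracked carefully in the wild category $\mathbb W$, reduces to an integration along $\mathbb R$ of a suitable twist of the exponential local system and hence vanishes by Proposition~\ref{prop:cohomologyofexp}. The verification of $\Phi\Psi\simeq \mathrm{id}$ is analogous.

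The main technical difficulty is the rigorous execution of the off-diagonal cancellation: in the plain-sheaf picture, the off-diagonal contribution to $\Psi\Phi$ is supported on two open quadrants of $\mathbb R^2\times \mathbb R^2$ and has a non-trivial singular-support footprint, so the argument must be conducted fully inside $\mathbb W\otimes \mathbb W$, where the refinement by the wild (\emph{exp}-twist) structure is what lets Proposition~\ref{prop:cohomologyofexp} deliver the vanishing; tracking completeness, continuity and orientation conventions through the base change is delicate. A potentially cleaner route that circumvents this bookkeeping is to read the torsor property off the Vaintrob geometric presentation: the $\mathbb R_{>0}$-action on $\mathbb W$ corresponds to the natural scaling on the almost stack $\tilde{\mathbb A}^{1,a}/\tilde{\mathbb G}_m$, which lifts through the infinite-roots cover to a manifestly free action on $\tilde{\mathbb G}_m$, so that the torsor equivalence follows by descent.
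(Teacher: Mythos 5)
Your route is genuinely different from the paper's. The paper does not compute any kernels: it base-changes Vaintrob's equivalence $(\mathcal D(\mathbb R,\mathbb S),\star)\cong \mathcal D_{\mathrm{qc}}(\tilde{\mathbb P}^{1,a}/\tilde{\mathbb G}_m)$ along $\mathbb S\to\mathbb W$, composes with the $\mathbb W$-linear Fourier equivalence $(\mathcal D(\mathbb R,\mathbb W),\star)\cong(\mathcal D(\mathbb R,\mathbb W),\otimes)$, and then removes $\tilde{\mathbb G}_m/\tilde{\mathbb G}_m$, which corresponds to $\{0\}\subset\mathbb R$; matching the two components of each side ($0$/$\infty$ versus $\mathbb R_{>0}$/$\mathbb R_{<0}$) gives $\mathbb W\otimes_{\mathbb S}\mathbb W\cong\mathcal D(\mathbb R_{>0},\mathbb W)$ with no off-diagonal analysis at all; the vanishing $\pi_!\mathrm{exp}=0$ enters only once, inside the already-proved Fourier theorem. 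Your plan is the ``direct'' proof the paper alludes to but does not carry out: the correspondence $F(r,s,t)=(t,r+s/t)$ is the right one, the self-fibre product and the two projections $(r,s)$, $(r+\Delta r,s-t\Delta r)$ are computed correctly, and the diagonal stratum does produce the identity with the shifts cancelling as you say. What your approach buys is an explicit formula for the inverse; what the paper's buys is that all completeness bookkeeping is outsourced to theorems already in hand.

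Two points in your write-up are genuine gaps rather than routine bookkeeping. First, the identification of $\mathbb W\otimes_{\mathbb S}\mathbb W$ with a full subcategory of $\mathcal D(\mathbb R^2,\mathbb S)$ (complete and nonnegative singular support in each variable) is itself a computation of a Lurie tensor product of localizations, i.e.\ a statement of the same nature as the proposition; it is true (Verdier quotients are pushouts in $\mathrm{Pr}^L$, so $\mathbb W\otimes\mathbb W$ is the quotient of bifiltered spectra by the subcategory generated by objects incomplete in either variable, which one then identifies with bicomplete objects), but it must be proved, not asserted, and you must also check that $\Phi=F_!p^\ast$ and $\Psi=p_!F^!$ descend through these localizations (that $\Psi$ sends fibrewise-null objects to objects killed by bi-completion is not automatic). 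Second, the off-diagonal cancellation is not literally an application of Proposition~\ref{prop:cohomologyofexp}: for a generator $\mathbb S(r)\otimes\mathbb S(s)$ the off-diagonal piece of $\Psi\Phi$ is, slice by slice, a constant (shifted) sheaf on a half-line going to $-\infty$ in the filtration direction, so its filtration is constant with invertible transition maps and it dies upon enforcing completeness in $\mathbb W\otimes\mathbb W$ --- the same mechanism as in the proof of that proposition, but the statement you need is the bi-complete vanishing, and you should prove it for arbitrary objects (or reduce to generators by checking both functors preserve colimits and that the $\mathbb S(r)\otimes\mathbb S(s)$ generate). Finally, the suggested ``cleaner route'' at the end does not work as stated: the $\mathbb R_{>0}$-rescaling of filtrations does not act on the scheme $\tilde{\mathbb G}_m$ (only rational rescalings act before almost-ification/completion), and ``the torsor equivalence follows by descent'' presupposes a cover along which to descend, which is exactly what the proposition provides; as written this begs the question.
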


Geometrically, this is an isomorphism
\[
\mathbb R_{>0,\mathrm{Betti}}\times_{\mathrm{Spec}(\mathbb S)} \mathrm{Spec}(\mathbb W)\cong \mathrm{Spec}(\mathbb W)\times_{\mathrm{Spec}(\mathbb S)} \mathrm{Spec}(\mathbb W).
\]

This equivalence can be proved directly, but it follows from the universal property of $\mathbb W$. Namely, the Fourier equivalence yields a $\mathbb W$-linear symmetric monoidal equivalence
\[
(\mathcal D(\mathbb R,\mathbb W),\star)\cong (\mathcal D(\mathbb R,\mathbb W),\otimes)
\]
which, when combined with the base change of Vaintrob's equivalence to $\mathbb W$, yields a $\mathbb W$-linear symmetric monoidal equivalence
\[
(\mathcal D_{\mathrm{qc}}(\tilde{\mathbb P}^{1,a}/\tilde{\mathbb G}_m),\otimes)\otimes_{\mathbb S} \mathbb W\cong (\mathcal D(\mathbb R,\mathbb W),\otimes).
\]
Under this equivalence, $\tilde{\mathbb G}_m/\tilde{\mathbb G}_m\subset \tilde{\mathbb P}^{1,a}/\tilde{\mathbb G}_m$ corresponds to $\{0\}\subset \mathbb R$. Removing this part, both sides decompose into two pieces (sheaves near $0$/$\infty$ resp.~sheaves on $\mathbb R_{>0}$/$\mathbb R_{<0}$), and we get the desired equivalence
\[
\mathbb W\otimes_{\mathbb S} \mathbb W\cong \mathcal D(\mathbb R_{>0},\mathbb W)
\]
that can be unraveled to be the construction above.

\begin{warning} There are two interpretations of $\mathbb W$ now: One as sheaves on $\mathbb R$ with nonnegative singular support and a certain completeness condition (endowed with the convolution symmetric monoidal structure), and another as a twisted form of sheaves on $\mathbb R_{>0}$ (endowed with the usual tensor product of sheaves). These two interpretations are quite different. After base change to $\mathbb W$, they are Fourier dual under the Fourier equivalence on $\mathbb R$: Namely, nonnegative singular support is swapped with physical support in $\mathbb R_{\geq 0}$ under the Fourier equivalence, and the completeness condition amounts to taking the quotient by sheaves supported at $\{0\}\subset \mathbb R_{\geq 0}$.
\end{warning}

The perspective that $\mathbb W$ is a twisted form of $\mathcal D(\mathbb R_{>0},\mathbb S)$ explains some features of the situation. First, it explains that the unit of $\mathbb W$ is not compact (as $\mathbb R_{>0}$ is noncompact). It also explains the fully faithfulness of $\mathrm{Sp}\to \mathbb W$ (as $\mathbb R_{>0}$ is contractible). Indeed, both of these statements can be proved by descent along $\mathrm{Spec}(\mathbb W)\to \mathrm{Spec}(\mathbb S)$, and then follow from the paranthetical statements. It also says that wild sheaves on $X$ are a twisted form of sheaves on $X\times \mathbb R_{>0}$, again in the sense that after pullback along $\mathrm{Spec}(\mathbb W)\to \mathrm{Spec}(\mathbb S)$, they become equivalent.

The existence of a canonical nontrivial $\mathbb R_{>0,\mathrm{Betti}}$-torsor over $\mathrm{Spec}(\mathbb S)$ (thus, over $\mathrm{Spec}(\mathbb Z)$) is quite surprising. It gives a new general direction to localize in, in algebraic and arithmetic geometry. For example, $\mathrm{Spec}(\mathbb W)$ has something like a norm in the sense developed by Clausen and the author (as a multiplicative ``norm'' map $|\cdot|: \mathbb P^1_{\mathbb W}\to [0,\infty]_{\mathrm{Betti}}$ with some properties; however, here, the norm is defined only on an infinitely ramified version $\tilde{\mathbb P}^1$ of $\mathbb P^1$). This still implies that for any (affine, for simplicity) perfect $\mathbb F_p$-scheme $X=\mathrm{Spec}(A)$, its base change to $\mathrm{Spec}(\mathbb W)$ has a canonical map $X_{\mathbb W}\to \mathcal M(A)$ to the Berkovich space of $A$. After base change to $\mathrm{Spec}(\mathbb W)$, one can localize in algebraic geometry not just over spectral spaces like the topological space $\mathrm{Spec}(A)$, but over compact Hausdorff spaces!

\section{Outlook}

The construction works much more generally. Assume you have any sheaf theory encoded in a ring stack $\mathcal R$ over some base $\mathrm{Spec}(A)$. One can then parametrize additive line bundles on $\mathcal R$ which induce a Fourier equivalence. Working in a suitable general kind of algebraic geometry, as in the previous section, this always defines over $\mathrm{Spec}(A)$ a quasi-torsor under the group of units $\mathcal R^\times$. Indeed, $\mathcal R^\times$ acts on choices of additive line bundles via pullback under the multiplication on $\mathcal R$. Once one has such an additive line bundle, this action is simply transitive -- this follows from the same argument as in the previous section.

This paper deals with the case of the ring stack $\mathbb R_{\mathrm{Betti}}$ over $\mathrm{Spec}(\mathbb S)$; already this case yields highly nontrivial structures.

This way, any kind of sheaf theory encoded in a ring stack, yielding a realization of usual motives, can be upgraded to a sheaf theory which yields a realization of exponential motives (taking a variety with potential $(X,f: X\to \mathbb A^1)$ to the cohomology of $f^\ast \mathrm{exp}$).

\bibliographystyle{amsalpha}
\bibliography{WildBetti}

\end{document}